\def\BState{\State\hskip-\ALG@thistlm}
\def\BState{\State\hskip-\ALG@thistlm}
\definecolor{darkblue}{rgb}{0,0,0.7}
\definecolor{darkred}{rgb}{0.7,0,0}
\newtheorem{theorem}{Theorem}
\newtheorem{proposition}[theorem]{Proposition}
\newtheorem{remark}[theorem]{Remark}
\renewcommand*{\dot}[1]{\accentset{\mbox{\large .}}{#1}}
\begin{document}

\title[Stabilization of affine systems with polytopic control value sets]{Stabilization of affine systems with polytopic control value sets}

\author[H. Leyva]{Horacio Leyva}
\email{horacio.leyva@unison.mx}
\address{Departamento de Matem\'aticas, Universidad de Sonora, M\'exico}

\author[B. Aguirre-Hern\'andez]{Baltazar Aguirre-Hern\'andez}
\email{bahe@xanum.uam.mx}
\address{Departamento de Matem\'aticas, Universidad Aut\'onoma Metropolitana Iztapalapa, M\'exico}

\author[J. F. Espinoza]{Jes\'us F. Espinoza}
\email{jesusfrancisco.espinoza@unison.mx}
\address{Departamento de Matem\'aticas, Universidad de Sonora, M\'exico}

\begin{abstract}
The objective of this document is to design continuous feedback controls for global asymptotic stabilization (GAS) of affine systems, with control restricted to a compact and convex set (CVS). This stabilization problem is solved based on a design of a feedback function restricted to the hyperbox and obtained by means of the CLF theory. By ``normalizing'' this feedback, the continuous stabilizer restricted to CVS is obtained.
\end{abstract} 

\keywords{Stabilization, Admissible function, Lyapunov function.}
\maketitle

\section{Introduction}
Consider the multiple input continuous-time affine system
\begin{equation}\label{equation:affine_system}
    \dot{x} = f(x) + G(x)u,
\end{equation}
where $x\in\mathbb{R}^{n}$, $f, g_{i} : \mathbb{R}^{n} \rightarrow \mathbb{R}^{n}$, for $i=1,\ldots,m$, are $\mathcal{C}^{s}(\mathbb{R}^{n})$ vector fields ($s \geq 0$), $g_{i}(x)$ are the columns of the matrix $G(x)$, and the \textit{control value set} (CVS) is a bounded and convex subset of $\mathbb{R}^m$. Without loss of generality, we shall assume that $f(0)=0$. Such CVS will be required to be a sublevel set 
\[
U_{\phi}=\left\{ u \in \mathbb{R}^{m} \mid \phi(u) \leq 1 \right\}, 
\]  
where $\phi : \mathbb{R}^{m} \rightarrow \mathbb{R}_{+}$ is a convex and positively homogeneous function, that is, $\phi(ru) = r \phi(u)$ for any real number $r \geq 0$; in particular, $\partial U_{\phi }$ is given by the level set $\left\{ u \in \mathbb{R}^{m} \mid \phi(u) = 1 \right\}$. We will assume that the set $U_{\phi} \subset \mathbb{R}^{m}$ is compact and convex with $0 \in \mathrm{int}\, U_\phi$.

Is well known the usefulness of discontinuous controls in system stabilization, mainly to obtain robustness and stabilization in finite time, see \cite{Smirnov:1996}. However, discontinuous controls lead to non-modeled instabilities (such as ``chattering'', see \cite{Aguilar:2005}), then in this work we return to a continuous control design, with robustness properties, which can be used to stabilize affine systems with different CVS.

In order to obtain smooth stabilization, we consider the set of admissible feedback control functions $\mathcal{U}_{\phi}$ defined by
\[
\mathcal{U}_{\phi} := \left\{ u : \mathbb{R}^{n} \rightarrow U_{\phi } \mid u(x)\ \mbox{is continuous} \right\}.
\]

The main objective of this article is to address the problem of global asymptotic stabilization (GAS) of affine systems by means of an admissible feedback control $u(x)$.

Given any convex bounded CVS $U$, we seek to obtain continuous feedback control laws $u\left( x \right) \in U$ that stabilize nonlinear systems of type (\ref{equation:affine_system}). The relevance of this open problem was stated in \cite{Curtis}: ``Find universal formulas for CLF stabilization, for general (convex) control-value sets $U$''. To address this important problem, a review of the work carried out to date suggests that it is convenient to separate the bounded and convex sets $U$ in two classes, the sets $U$ with smooth boundary and those with a non-smooth boundary. In the second one class, we can find the polytopes, whose boundary $\partial U$ is piecewise linear.
% [{\color{red} AGREGAR REFERENCIAS ADICIONALES}]. 
% R. T. ROCKAFELLAR, Convex Analysis, Princeton University Press, Jan. 1997. p. 174
% Global CLF stabilization of systems with respect to a hyperbox, allowing the null-control input in its boundary (positive controls), in 53rd IEEE Conference on Decision and Control, IEEE, dec 2014, pp. 3107–3112.

Since Artstein's theorem (see \cite{ARTSTEIN19831163}) is valid in any bounded and convex CVS $U$, under the assumption that a ``control Lyapunov function'' (CLF) is known, we will approach the stabilization problem according to the line of work established in \cite{ARTSTEIN19831163, SoliDaun2014, Lin-Sontag:1995}. In the works \cite{Saperstone1971, SolisDaun2013, SolisDaun2015}, for a CVS $U\subset \mathbb{R}^{m}$ with a smooth boundary $\partial U$, studies were presented that addressed the stabilization problem using the CLF theory. %On the other hand, in \cite{Leyva2013, SOLISDAUN201111042, SoliDaun2014} some results were presented for the case where the CVS $U$ is a polytope. {\color{red} ?`C\'omo se relacionan con el presente trabajo?}
In the case of CVS a polytope, \cite{SOLISDAUN201111042, SoliDaun2014} shows the existence of an optimal feedback control for system (\ref{equation:affine_system}) that takes values at the vertices of the polytope and an explicit formula for it is obtained. In the case of the CVS represented by an asymmetric hyperbox, in \cite{Leyva2013} a continuous and explicit feedback function is presented to globally stabilize the system (\ref{equation:affine_system}).

In \cite[Formula 27]{Leyva2013} an explicit and decentralized design of admissible feedback controls $u^{\varepsilon }(x)$ restricted to a hyperbox $H \subset \mathbb{R}^{m}$ is presented, so that the proposed family of continuous controllers $u^{\varepsilon}(x)$ approaches the control that optimizes the \textit{robust stability margin}. In this paper we extend the hyperbox constrained continuous stabilizer design to other sets of control values, including the polytope case. With the exception of the hyperbox, i.e., a rectangular parallelepiped whose faces are each one perpendicular to some of the basis vectors, in the current literature there are no designs of continuous stabilizers restricted to a polytope.

% To obtain a continuous stabilizer restricted to a convex set $U\subset H$, with $0\in \mathrm{int}\, U$, it is required to represent $U$ as a level set:
% \begin{equation*}
% U_{\phi }:=\left\{ u\in \mathbb{R}^{m}:\phi (u)\leq 1\right\} ,
% \end{equation*}%
% where $\phi :\mathbb{R}^{m}\rightarrow \mathbb{R}_{+}$ is a convex and positively homogeneous function ($\phi (ru)=r\phi (u)$, for any real $r \geq 0 $); in particular, $\partial U_\phi = \{u \in \mathrm{R}^m \mid \phi(u) = 1\}$. We will assume that the set $U_{\phi}\subset \mathbb{R}^{m}$ is
% compact and convex with $0\in \mathrm{int}\, U_{\phi}$.

\section{Main results}

Given the system (\ref{equation:affine_system}) with CVS the hyperbox $H$, by means of a Lyapunov function $V(x)$, a design of admissible feedback functions is explicitly presented in \cite{Leyva2013}, with the property of being continuous, sub-optimal and decentralized. %How to extend this design to other convex CVS?

As was mentioned above, the study of convex sets has been divided in the literature into two large groups: the convex sets with smooth boundary (strictly convex) and polytopes. In \cite{Rockafellar1997} some concepts of convexity handled in this work can be consulted. In general, the problem of stabilizing system (\ref{equation:affine_system}) is strongly related to the particular characteristics of the CVS $U$, such as the smoothness of its boundary $\partial U$. In the literature of CLF theory there are stabilization studies of affine systems (\ref{equation:affine_system}), for a CVS $U$ bounded and strictly convex, with a smooth $\partial U$ boundary, articles \cite{Malisoff2000}, \cite{SolisDaun2013}, \cite{SolisDaun2015}, \cite{SolisDaun2010} and \cite{SONTAG1989117} correspond to this case.

%The works \cite{SONTAG1989117}, \cite{Malisoff2000}, \cite{SolisDaun2010}, \cite{SolisDaun2013} and \cite{SolisDaun2015} can be seen. On the other hand, there are few studies on the synthesis problem with control restricted to sets with non-smooth border (polytopes), some are \cite{Curtis}, \cite{SOLISDAUN201111042}, \cite{Leyva2013} and \cite{Leyva2014}.

%In \cite{HoracioLeyva2009} a design of non-negative control functions is presented for the stabilization problem of (\ref{equation:affine_system}), which corresponds to the $0 \in \partial U$ case. The case of multiple entry, with $U$ the hyperbox and $0 \in \partial U$, is studied at work \cite{Leyva2014}.

In general, in the stabilization problem of the affine system (\ref{equation:affine_system}) with a compact and convex set, is necessary a further work in finding explicitly admissible functions $u(x)$, with properties of smoothness and robustness.

% En general, en el problema de estabilizaci\'{o}n de (\ref{equation:affine_system}) con CVS convexo y compacto, siguen faltando funciones admisibles $u(x)$ presentadas expl\'{\i}citamente, en forma descentralizada, que incluyan caracter\'{\i}sticas de suavidad y robustez.

\subsection{Types of CVS \texorpdfstring{$U_{\phi}$}{U-phi}.}

Some results about convexity theory, considered implicitly in the development of this work can be found in \cite{Rockafellar1997}.

Examples of the support function $\phi$ for a non-empty compact convex set $U_{\phi}$, are the following:
\begin{itemize}
    \item $\phi_{1}(u) = L^{T}\left\vert u \right\vert$, where $L^{T} = (l_{1}, l_{2}, \ldots, l_{m})$, with $l_{i}$ positive constants.
    \item $\phi_{2}(u) = u^{T}Qu$, where $Q \in \mathbb{R}^{m\times m}$ is a positive definite matrix.
    \item $\phi_{3}(u) = \max_{i=1,\ldots, k}\{v_{i}^{T}u\}$, for $v_{1}, v_{2},\ldots, v_{k} \in \mathbb{R}^{m}$ non-zero vectors.
\end{itemize}

% \begin{equation} %\label{equation:support}
%     \phi_{1}(u) = L^{T}\left\vert u \right\vert, \quad \phi_{2}(u) = u^{T}Qu \quad\mbox{and}\quad \phi_{3}(u) = \max_{i=1,\ldots, k}\{v_{i}^{T}u\}
% \end{equation}
% where $L^{T} = (l_{1}, l_{2}, \ldots, l_{m})$, with $l_{i}$ positive constants, $\{v_{1}, v_{2},\ldots, v_{k}\} \in \mathbb{R}^{m}$
% represent non-zero vectors and $Q \in \mathbb{R}^{m\times m}$ is a positive definite matrix. 
For each such a support functions $\phi_{i}$, we assume that $0 \in \mathrm{int\,} U_{\phi_{i}}$, so that $\phi_{i}(u) = 0$ only if $u = 0$. The sets $U_{\phi_{i}}$ represented by
\begin{equation*} %\label{equation:U-phi}
    U_{\phi_{i}} := \{u \in \mathbb{R}^{m} \mid \phi_{i}(u) \leq 1 \}, \quad i = 1, 2, 3,
\end{equation*}
are compact and convex subsets of $\mathbb{R}^{m}$, where $U_{\phi_{1}}$ is a particular polytope with $2^m$ vertices and symmetric centered at the origin, $U_{\phi_{2}}$ is an ellipsoid also centered at the origin.% and $U_{\phi_{3}}$ is a bounded polyhedral set with $0 \in \mathrm{int\,} U$.

% An important class of compact convex sets is the class of convex polytopes, see \cite{SoliDaun2014}. 
For every convex polytope $P \subset \mathbb{R}^{m}$, with $0 \in \mathrm{int\,}P$, there are vectors $\{v_{1}, v_{2}, \ldots, v_{k}\} \in \mathbb{R}^{m}$, such that by means of the continuous non-negative function and piecewise linear (see \cite[Theorem 1.1]{Ziegler1995}, \cite[p. 174]{Rockafellar1997}),
\begin{equation*}
    \phi(u) = \max_{i=1,\ldots,k} \{v_{i}^{T}u \}
\end{equation*}
so that we can represent the polytope $P$ as
\begin{equation*} %\label{equation:polytope}
    P := \{u \in \mathbb{R}^{m} \mid \phi(u) \leq 1 \},
\end{equation*}
which we can denote as $P_{\phi}$. In \cite{SOLISDAUN201111042} and \cite{SoliDaun2014} polytopes are considered as CVS, giving rise to the corresponding set of admissible feedback functions $\mathcal{U}_{\phi}$. Currently there are no continuous stabilizers restricted to polytopes.

% How to design an admissible feedback function $u(x)$ to stabilize the solutions of system (\ref{equation:affine_system}) at equilibrium $x = 0$?

\subsection{Lyapunov function and Artstein's theorem}
The admissible stabilizer is obtained based on Artstein's theorem, see \cite{ARTSTEIN19831163}. Suppose that system (\ref{equation:affine_system}) is regular and $U_{\phi} \subset \mathbb{R}^{m}$ is a CVS. There is a smooth Lyapunov function $V(x)$ if there is a continuous control $u(x)$, except possibly at $x=0$, restricted to taking values in $U_{\phi}$, which generates the stabilization of the system (\ref{equation:affine_system}).

Given the system (\ref{equation:affine_system}) and the CVS $U_{\phi}$, to obtain an admissible stabilizer $u(x) \in \mathcal{U}_{\phi}$, two conditions must be met: the CLF condition and the SCP property.

\subsubsection*{The CLF condition} A non-negative function $V:\mathbb{R}^{n} \rightarrow \mathbb{R}_{+}$ is called the \emph{control Lyapunov function} (CLF), with respect to the system (\ref{equation:affine_system}) and the constraint $U_{\phi}$, if it happens that
\begin{equation} \label{equation:SCP_eq}
    \min\limits_{u\in U_{\phi}} \{a(x) + \beta(x) \cdot u\} < 0, \quad\mbox{ for all } x \neq 0,
\end{equation}
where
\begin{equation}\label{equation:CLF}
    a(x) := L_{f} V(x) \quad \& \quad \beta(x) := (\beta_{1}(x), \ldots, \beta_{m}(x)), \quad \mathrm{with\ } \beta_{i}(x) := L_{g_{i}} V(x),\ \, i=1,\ldots,m.
\end{equation}
%\begin{equation} \label{equation:CLF}
%    \min\limits_{u \in U_{\phi}} \{\nabla V(x) \cdot (f(x) + G(x)u)\} < 0 \quad \mbox{ for all } %\ x \neq 0.
%\end{equation}
This inequality means that there is an optimal stabilizer $\omega(x)$, which is not admissible because it is discontinuous; if the set $U_{\phi}$ is a polytope, the function $\omega(x)$ takes values only at the vertices of the polytope (see \cite{Leyva2014}, \cite{Leyva2013}, \cite{SOLISDAUN201111042} and \cite{SoliDaun2014}), and represents the control that gives the system the ``best stabilization rate'', according to the derivative of the Lyapunov function $V(x)$. A relevant purpose here is to find
a continuous function that approaches $\omega(x)$, without losing the previous inequality.

In \cite{ARTSTEIN19831163}, Zvi Artstein proved that the existence of a continuous stabilizing feedback control taking values in a convex CVS $U\subset\mathbb{R}^{m}$ is equivalent to the existence of a control Lyapunov function (CLF).

\subsubsection*{The SCP property} The existence of a continuous stabilizer at the origin is ensured by the \emph{small control property} (SCP): For each $\varepsilon >0$ there exists $\delta >0$ such that we have the inequality
\[ a(x) + \beta(x) \cdot u < 0, \mbox{ for all } x \neq 0, \]
for $u$ with $\Vert u \Vert_{U_{\phi}} < \varepsilon$, provided that $0 < \Vert x \Vert < \delta$, $a(x)$ and $\beta(x)$ as were defined above for (\ref{equation:SCP_eq}).

%We can rewrite the inequality (\ref{equation:CLF}) as
%\begin{equation} \label{equation:SCP_eq}
%    \min\limits_{u\in U_{\phi}} \{a(x) + \beta(x) \cdot u\} < 0, \quad\mbox{ for all } x \neq 0,
%\end{equation}
%where
%\begin{equation*}
%    a(x) := L_{f} V(x) \quad \& \quad \beta(x) := (\beta_{1}(x), \ldots, \beta_{m}(x)), \quad \mathrm{with\ } \beta_{i}(x) := L_{g_{i}} V(x),\ \, i=1,\ldots,m.
%\end{equation*}

We consider that the control value set is given by the hyperbox
\begin{equation*} %\label{equation:hyperbox}
    H := [-r_{1}^{-}, r_{1}^{+}] \times \cdots \times [-r_{m}^{-}, r_{m}^{+}] \subset \mathbb{R}^{m}, r_{i}^{-}, r_{i}^{+} > 0,
\end{equation*}
which can also be represented as
\begin{equation*} %\label{equation:hyperbox_eq}
    H := \{u \in \mathbb{R}^{m} \mid \max\limits_{i=1,...,m} \{\vert u_{i}\vert/r_{i}\} \leq 1\}
\end{equation*}
where $r_{i}$ for $i=1, 2, \ldots, m$, is defined as
\begin{equation*}
    r_{i}(b) := \begin{cases}
    r_{i}^{+} & if\ b \geq 0, \\
    r_{i}^{-} & if\ b \leq 0.
    \end{cases}
\end{equation*}

Therefore, for compact sets $H$ and $U_{\phi}$, with $0 \in \mathrm{int}\, U_{\phi} \subset H \subset \mathbb{R}^{m}$, it happens that
\begin{equation*}
    \min\limits_{u\in H} dV/dt \leq \min\limits_{u\in U_{\phi}} dV/dt,
\end{equation*}
and the CLF condition and SCP property remain when changing the CVS $U_{\phi}$ to $H$.

\subsection{An explicit feedback control formula with respect to a hyperbox.} The $\varepsilon$-parameterized design ($\varepsilon > 0$) of the family of feedback control functions $u^{\varepsilon}(x)$ presented in \cite[Theorem 14]{Leyva2013} is considered, which was obtained by means of the Artstein's theorem with the hyperbox $H$ as CVS. % {\color{red}For a polytope $P \subset H$, the class of systems that are GAS by continuous control functions taking values in the control value set $P$ is a proper subset of the class of systems that are GAS by controllers restricted to the hyperbox $H$.}

%{\color{red}The control feedback function presented in \cite{Leyva2013} consists of a decentralized design where the inputs $u_{i}(x)$ and $u_{j}(x)$, with $i\neq j$, assume values independently.} 
This feedback function $u^{\varepsilon}(x)$ is admissible with the hyperbox $H$, explicitly given, decentralized and sub-optimal, defined as follows:
\begin{equation} \label{equation:feedback}
    u^{\varepsilon}(x) := (u_{1}^{\varepsilon}(x), \ldots, u_{m}^{\varepsilon}(x))
\end{equation}
with
\begin{equation*}
    u_{i}^{\varepsilon}(x) = \varrho_{i}^{\varepsilon}(a(x), \vert \beta \vert r(x))\, \overline{\omega}_{i}(x),
\end{equation*}
where $\overline{\omega}(x)$ is the best rate control sharing the scheme of $\min\limits_{u \in H} dV/dt$, with the non-negative function $\vert \beta \vert r := \vert \beta_{1} \vert r_{1} + \cdots + \vert \beta_{m} \vert r_{m}$. The function $\varrho_{i}^{\varepsilon} : \mathbb{R} \times [0, \infty] \rightarrow \mathbb{R}$ is defined by
\begin{equation*} %\label{equation:varrho}
    \varrho_{i}^{\varepsilon}(a, \beta) =
    \begin{cases}
    1 - \left(1 - \dfrac{\vert a \vert + a}{2\, \vert \beta \vert r}\, \dfrac{\vert \beta_{i} \vert r_{i}}{\vert \beta \vert r}\right) \exp\left(\tau_{i}^{\varepsilon} \dfrac{ \vert \beta_{i} \vert r_{i}}{\vert \beta \vert r}\right) & if\ \vert \beta_{i} \vert r_{i} > 0, \\
    0 & if\ \vert \beta_{i} \vert r_{i}=0,
    \end{cases}
\end{equation*}
and $\tau_{i}^{\varepsilon}(x)$ is a non-positive function defined as
\begin{equation} \label{equation:tau}
    \tau_{i}^{\varepsilon}(x) = 
    \begin{cases}
    m \dfrac{\ln(\lambda(x))}{\lambda(x)} - \varepsilon \vert \beta_{i} \vert r_{i} & if\ \vert \beta \vert r > 0, \\
    0 & if\ \vert \beta \vert r = 0,
    \end{cases}
\end{equation}
for $i=1, \ldots, m$, where $\lambda(x) = 1 - \frac{1}{2}(\vert a(x) \vert + a(x))/ \vert \beta \vert r$ and $\varepsilon > 0$ is a tuning
parameter.

The control (\ref{equation:feedback}) - (\ref{equation:tau}) is continuous with respect to $x$, since the regulating function $\varrho^{\varepsilon}(a, \beta)$ cancels the discontinuities of the optimal stabilizer $\overline{\omega}(x)$.

\section{An explicit feedback control formula regarding CVS \texorpdfstring{$U_{\phi} \subset H$}{U C H}} \label{section:explicit_feedback}

Let $\phi:\mathbb{R}^{m} \rightarrow \mathbb{R}_{+}$ be a positively homogeneous and convex function, so that the following compact set $U_{\phi}$ can be defined as the level set
\begin{equation*} %\label{equation:level-set}
    U_{\phi} = \{ u \in \mathbb{R}^{m} \mid \phi(u) \leq 1\}, 
\end{equation*}

Let's see now, how to stabilize systems of type (\ref{equation:affine_system}) with admissible feedback functions restricted to $U_{\phi}$. Consider the continuous feedback controls in a decentralized way $u^{\varepsilon}(x)$, given by (\ref{equation:feedback}) - (\ref{equation:tau}). The main idea is to extend the feedback function $u^{\varepsilon}(x)$ restricted to the hyperbox $H$, so that the feedback function $u_{\phi}^{\varepsilon}(x)$, restricted to the new CVS $U_{\phi} \subset H$. %, in such a way that it preserves the same properties; that is, they represent decentralized, sub-optimal and admissible stabilizers.

Once the set $U_{\phi}$ has been defined, a hyperbox $H$ such that $U_{\phi}\subset H$ is chosen (the smallest possible). Let $M$ be such that
\begin{equation*} %\label{equation:M}
    M := \max\limits_{H} \phi(u),
\end{equation*}
thus
\[ 0 \leq \min_{H} \phi(u) \leq \phi(u) \leq \max_{H} \phi(u) = M,\]
therefore, for the case $1 \leq \phi(u)$ and for any non-negative function $a(x)$, we have
\[ \dfrac{a(x)}{M} \leq \dfrac{a(x)}{\phi(u)} \leq a(x). \]

Now, consider the affine system
\begin{equation} \label{equation:M_affine_system}
    \dot{x} = \dfrac{1}{M} f(x) + g_{1}(x) w_{1} + \cdots + g_{m}(x)w_{m},
\end{equation}
with control $w = (w_{1}, \ldots, w_{m})^{T} \in U_{\phi}$. Considering the admissible feedback function $u^{\varepsilon}(x) \in H$
given by (\ref{equation:feedback}) - (\ref{equation:tau}), we propose the following feedback function $v:\mathbb{R}^{n} \rightarrow U_{\phi}$, given by
\begin{equation} \label{equation:M_feedback}
    w_{\phi}^{\varepsilon}(x) = 
    \begin{cases}
    u^{\varepsilon}(x) & if\ \phi(u^{\varepsilon}(x)) \leq 1, \\
    \dfrac{1}{\phi(u^{\varepsilon}(x))} u^{\varepsilon}(x) & if\ \phi(u^{\varepsilon}(x)) \geq 1.
    \end{cases}
\end{equation}

\begin{proposition}
If the function $V(x)$ is CLF and satisfies the SCP with respect to the affine system (\ref{equation:affine_system}) with CVS the hyperbox $H$, then the feedback function $w_{\phi}^{\varepsilon}(x)$ given by (\ref{equation:M_feedback}) is admissible and the feedback system (\ref{equation:M_affine_system})-(\ref{equation:M_feedback}) is GAS.
\end{proposition}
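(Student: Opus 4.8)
The plan is to establish the two defining properties of an admissible stabilizer for the reduced system (\ref{equation:M_affine_system}) — that $w_{\phi}^{\varepsilon}$ is continuous with values in $U_{\phi}$, and that $V$ remains a strict Lyapunov function for the closed loop (\ref{equation:M_affine_system})--(\ref{equation:M_feedback}) — reducing everything to the already-established GAS of the hyperbox feedback $u^{\varepsilon}$ furnished by Theorem 14 of \cite{Leyva2013}, which amounts to the strict CLF estimate $a(x) + \beta(x)\cdot u^{\varepsilon}(x) < 0$ for all $x \neq 0$.

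First, admissibility. That $w_{\phi}^{\varepsilon}(x) \in U_{\phi}$ follows at once from positive homogeneity of $\phi$: on the second branch $\phi(w_{\phi}^{\varepsilon}) = \phi\!\left(u^{\varepsilon}/\phi(u^{\varepsilon})\right) = 1$, while on the first branch $\phi(w_{\phi}^{\varepsilon}) = \phi(u^{\varepsilon}) \leq 1$. For continuity I would invoke the pasting lemma: the two branches agree on the interface $\{\phi(u^{\varepsilon}(x)) = 1\}$, since there the factor $1/\phi(u^{\varepsilon})$ equals $1$; each branch is continuous because $u^{\varepsilon}$ is continuous and the denominator $\phi(u^{\varepsilon})$ is bounded below by $1$ wherever the second branch is used; and the SCP forces $u^{\varepsilon}(x) \to 0$ as $x \to 0$, so $\phi(u^{\varepsilon}(x)) \to 0$ and a whole neighborhood of the origin lies in the first branch, giving $w_{\phi}^{\varepsilon}(0)=0$ and continuity at the origin.

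Next comes the Lyapunov decrease, which is the heart of the matter. I would write the feedback uniformly as $w_{\phi}^{\varepsilon}(x) = c(x)\,u^{\varepsilon}(x)$ with $c(x) = \min\{1,\,1/\phi(u^{\varepsilon}(x))\}$, and record two facts. The bound $c(x) \geq 1/M$: since $u^{\varepsilon}(x) \in H$ we have $\phi(u^{\varepsilon}(x)) \leq \max_{H}\phi = M$, whence $1/\phi(u^{\varepsilon}(x)) \geq 1/M$, and also $1 \geq 1/M$ because $\partial U_{\phi} = \{\phi = 1\} \subset H$ forces $M \geq 1$. The sign fact $\beta(x)\cdot u^{\varepsilon}(x) \leq 0$: each component satisfies $u_{i}^{\varepsilon} = \varrho_{i}^{\varepsilon}\,\overline{\omega}_{i}$ with $\varrho_{i}^{\varepsilon} \geq 0$, and since the best-rate control $\overline{\omega}$ minimizes $\beta\cdot w$ over the box $H$ one has $\beta_{i}\overline{\omega}_{i} = -|\beta_{i}|r_{i} \leq 0$, hence $\beta_{i}u_{i}^{\varepsilon} \leq 0$ for every $i$ and summing gives the claim. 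Differentiating $V$ along (\ref{equation:M_affine_system}) then yields
\begin{equation*}
  \dot{V} = \frac{a(x)}{M} + c(x)\,\beta(x)\cdot u^{\varepsilon}(x) \;\leq\; \frac{1}{M}\bigl(a(x) + \beta(x)\cdot u^{\varepsilon}(x)\bigr) \;<\; 0, \qquad x \neq 0,
\end{equation*}
where the middle inequality combines $\beta\cdot u^{\varepsilon} \leq 0$ with $c \geq 1/M$ (multiplying a non-positive number by the larger factor makes it smaller), and the final strict inequality is exactly the CLF estimate carried over from the hyperbox design; note this single chain works simultaneously for $a \geq 0$ and $a < 0$.

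Finally, with $V$ positive definite and radially unbounded, $\dot{V} < 0$ off the origin, and $w_{\phi}^{\varepsilon}$ a continuous feedback vanishing at $0$, global asymptotic stability of (\ref{equation:M_affine_system})--(\ref{equation:M_feedback}) follows from the standard direct Lyapunov theorem (equivalently, Artstein's theorem applied to the CVS $U_{\phi}$). The step I expect to be the main obstacle is the sign fact $\beta(x)\cdot u^{\varepsilon}(x) \leq 0$: it is precisely what lets one bound $\dot V$ by $\tfrac1M(a+\beta\cdot u^\varepsilon)$ without splitting into cases on the sign of $a$, and it must be read off carefully from the explicit formulas (\ref{equation:feedback})--(\ref{equation:tau}), specifically from the non-negativity of $\varrho_{i}^{\varepsilon}$ and the minimizing property of $\overline{\omega}$.
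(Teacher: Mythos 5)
Your proof is correct, and it is actually more complete than the paper's own argument. The skeleton is the same: admissibility follows from positive homogeneity of $\phi$ together with continuity of the quotient on the branch $\phi(u^{\varepsilon})\geq 1$, and GAS is obtained by transporting the hyperbox CLF inequality $a(x)+\beta(x)\cdot u^{\varepsilon}(x)<0$ through the scaling. The decisive step, however, differs. The paper's proof divides that inequality by $\phi(u^{\varepsilon})$ and then invokes $\tfrac{1}{M}a(x)\leq \tfrac{1}{\phi(u^{\varepsilon})}a(x)$, an inequality that holds only when $a(x)\geq 0$; indeed the proof explicitly restricts to the case ``$\phi(u)\geq 1$, with $a(x)\geq 0$'' and never addresses $a(x)<0$ (for which that comparison reverses), nor does it say anything about the Lyapunov decrease on the branch $\phi(u^{\varepsilon})\leq 1$, where $w_{\phi}^{\varepsilon}=u^{\varepsilon}$ but the drift of (\ref{equation:M_affine_system}) is still scaled by $1/M$. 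Your sign fact $\beta(x)\cdot u^{\varepsilon}(x)\leq 0$ --- read off from $\varrho_{i}^{\varepsilon}\geq 0$ and $\beta_{i}\overline{\omega}_{i}=-\vert\beta_{i}\vert r_{i}$ --- is exactly what closes these cases: writing $w_{\phi}^{\varepsilon}=c\,u^{\varepsilon}$ with $c=\min\{1,1/\phi(u^{\varepsilon})\}\geq 1/M$ yields the single chain $\dot{V}\leq \tfrac{1}{M}\bigl(a+\beta\cdot u^{\varepsilon}\bigr)<0$ for all $x\neq 0$, uniformly in the sign of $a$ and in the branch. This is not merely stylistic: for a general admissible $u$ satisfying only $a+\beta\cdot u<0$, the scaled system can fail to decrease (take $a=-10$, $\beta\cdot u=5$, $\phi(u)=3/2$, $M=10$; then $a+\beta\cdot u<0$ but $\tfrac{a}{M}+\tfrac{\beta\cdot u}{\phi(u)}>0$), so the conclusion genuinely depends on this structural property of the design (\ref{equation:feedback})--(\ref{equation:tau}). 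In short: same architecture, but your explicit sign lemma repairs a real gap in the paper's case analysis, at the modest cost of having to verify $\varrho_{i}^{\varepsilon}\geq 0$ from the explicit formulas.
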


\begin{proof}
The continuity of $w^{\varepsilon}(x)$ is inherited from the continuity of $u^{\varepsilon}(x)$, see \cite[Prop. 12, Theorem 14]{Leyva2013}. For the case $\phi(u^{\varepsilon}(x)) \leq 1$ it is immediate, since $w^{\varepsilon}(x) = u^{\varepsilon}(x)$. If $\phi(u^{\varepsilon}(x)) \geq 1$, then
\begin{equation*}
    \dfrac{1}{\phi(u^{\varepsilon}(x))} u_{i}^{\varepsilon}(x),
\end{equation*}
for $i = 1,\ldots , m$, is a quotient of continuous functions, in fact, they are the components of the vector function $\dfrac{1}{\phi(u^{\varepsilon}(x))} u^{\varepsilon}(x)$.

It is satisfied that $w^{\varepsilon}(x) \in U_{\phi}$, since for the case $\phi(u^{\varepsilon}(x)) \geq 1$ we have that
\[ \phi\left(\dfrac{1}{\phi(u^{\varepsilon}(x))} u^{\varepsilon}(x)\right) = \dfrac{1}{\phi(u^{\varepsilon}(x))} \phi(u^{\varepsilon}(x)) = 1, \]
since $\phi$ is positively homogeneous.

Next, we prove that the feedback system (\ref{equation:M_affine_system}) - (\ref{equation:M_feedback}) is globally asymptotically stable. With the design $u^{\varepsilon}(x) \in H$ given by (\ref{equation:feedback}) - (\ref{equation:tau}), we have
\begin{equation*}
    a(x) + b_{1} u_{1}^{\varepsilon}(x) + \cdots + b_{m}u_{m}^{\varepsilon}(x) < 0 \quad \mbox{ for all } x \neq 0,
\end{equation*}
such that, for the case $\phi(u) \geq 1$, with $a(x) \geq 0$, we have
\begin{equation*}
    \frac{1}{M} a(x) \leq \frac{1}{\phi(u)} a(x) \leq a(x).
\end{equation*}
Therefore,
\begin{equation*}
    \frac{1}{M} a(x) + b_{1}\frac{1}{\phi(u)} u_{1}(x) + \cdots + b_{m}\frac{1}{\phi(u)} u_{m}(x) <0 \quad \mbox{ for all } x \neq 0,
\end{equation*}
we conclude that the feedback system (\ref{equation:M_affine_system}) - (\ref{equation:M_feedback}) is globally asymptotically stable.
\end{proof}

%{\color{red} \textbf{[PENDIENTE]} Proposición 2 (modicado). Si el sistema afín  con CVS la hipercaja H, la función es CLF y satisface la propiedad SCP, entonces el sistema retroalimentado GAS.}

By \cite[Formula (27)]{Leyva2013}, we have the admissible feedback $u(x) = (u_{1}(x),\ldots , u_{m}(x))^{T}\in H$ with coordinate functions $u_{i}(x) = \rho _{i}^{\varepsilon }\left( x\right) \omega _{i}\left( x\right)$ and $\varepsilon > 0$ a tunning parameter, with a rescaling vector $\rho^{\varepsilon }\left( x\right) =\left( \rho _{1}^{\varepsilon }\left(x\right) ,\ldots ,\rho _{m}^{\varepsilon }\left( x\right) \right) $, and $\omega (x)=(\omega _{1}(x),...,\omega _{m}(x))^{T}$ being the CLF-optimal solution of (\ref{equation:CLF}), for $u \in H$. So that, the formula (\ref{equation:M_feedback}) has components as follows,
\begin{align*}
w_{i}^{\varepsilon} \left(x\right) = 
\left\{ 
    \begin{array}{ccc}
    \rho_{i}^{\varepsilon} \left(x\right) \omega_{i} \left(x\right)  & \mbox{if } \phi \left(u\left(x\right)\right) \leq 1, \\
    &  \\ 
    \dfrac{1}{\phi \left(u^{\varepsilon} \left(x\right)\right)} \rho_{i}^{\varepsilon} \left(x\right) \omega_{i} \left(x\right) & \mbox{if } \phi \left(u\left(x\right)\right) \geq 1.
    \end{array}
\right.
\end{align*}

From \cite[Theorem 14]{Leyva2013}, if $\beta _{i}(x)\neq 0$, $i=1,2,\ldots ,m$, then $\lim\limits_{\varepsilon \rightarrow \infty} u^{\varepsilon}(x) = \omega(x)$, therefore the control (\ref{equation:M_feedback}) satisfies that,
\begin{equation*}
\lim\limits_{\varepsilon \rightarrow \infty}v^{\varepsilon}\left(x\right)
=\dfrac{1}{\phi \left(\omega(x) \right)} \omega(x)\in \partial U_{\phi },
\end{equation*}
since $\phi \left( \dfrac{1}{\phi \left( \omega (x)\right) }\omega (x)\right) =\dfrac{1}{\phi \left( \omega (x)\right) }\phi \left( \omega (x)\right) =1$. Then, if $u^{\varepsilon }\left( x\right) \in
\partial H$, it follows that $w^{\varepsilon }\left( x\right) \in \partial U_{\phi}$.

\begin{remark} Given an open-loop unstable system (i.e., $a\left( x\right) \geq 0$), from (\ref{equation:SCP_eq}) with control (\ref{equation:M_feedback}) we have that the global instability of the system with feedback can be represented by the inequality
\[ \frac{1}{k} a(x) + \beta(x) w_{\phi}^{\varepsilon} (x) < 0 \text{, for all } x \neq 0 \text{ and for } k \geq M \geq 1,
\]
so that, the admissible formula (\ref{equation:M_feedback}) presents a tradeoff: the magnitude of the constant $M \geq 1$ is directly proportional to the size of the set $H\backslash U_{\phi }$, on such way that decreases the size of the instability $\dfrac{1}{k}a\left( x\right)$ in order to hold the above inequality.
\end{remark}

\section{Example} \label{section:example}
Let us consider the affine system (\ref{equation:affine_system}) with $m=2$ and CVS given by the triangle $T$ defined by $T = \mathrm{conv}\{v_0 = (0, -2), v_1 = (\sqrt{3}, 1), v_2 = (-\sqrt{3},1)\}$, depicted in Figure \ref{Figure:T}. \bigskip
% \begin{equation*}
%     T = \mathrm{conv}\left\{
%     \begin{pmatrix}
%     \sqrt{3} \\ 1
%     \end{pmatrix}, 
%     \begin{pmatrix}
%     -\sqrt{3} \\ 1
%     \end{pmatrix},
%     \begin{pmatrix}
%     0 \\ -2
%     \end{pmatrix}
%     \right\}.
% \end{equation*}

\begin{figure}[H] 
\centering
  \begin{subfigure}{0.33\textwidth}
    \centering
    \includegraphics[width=0.9\linewidth]{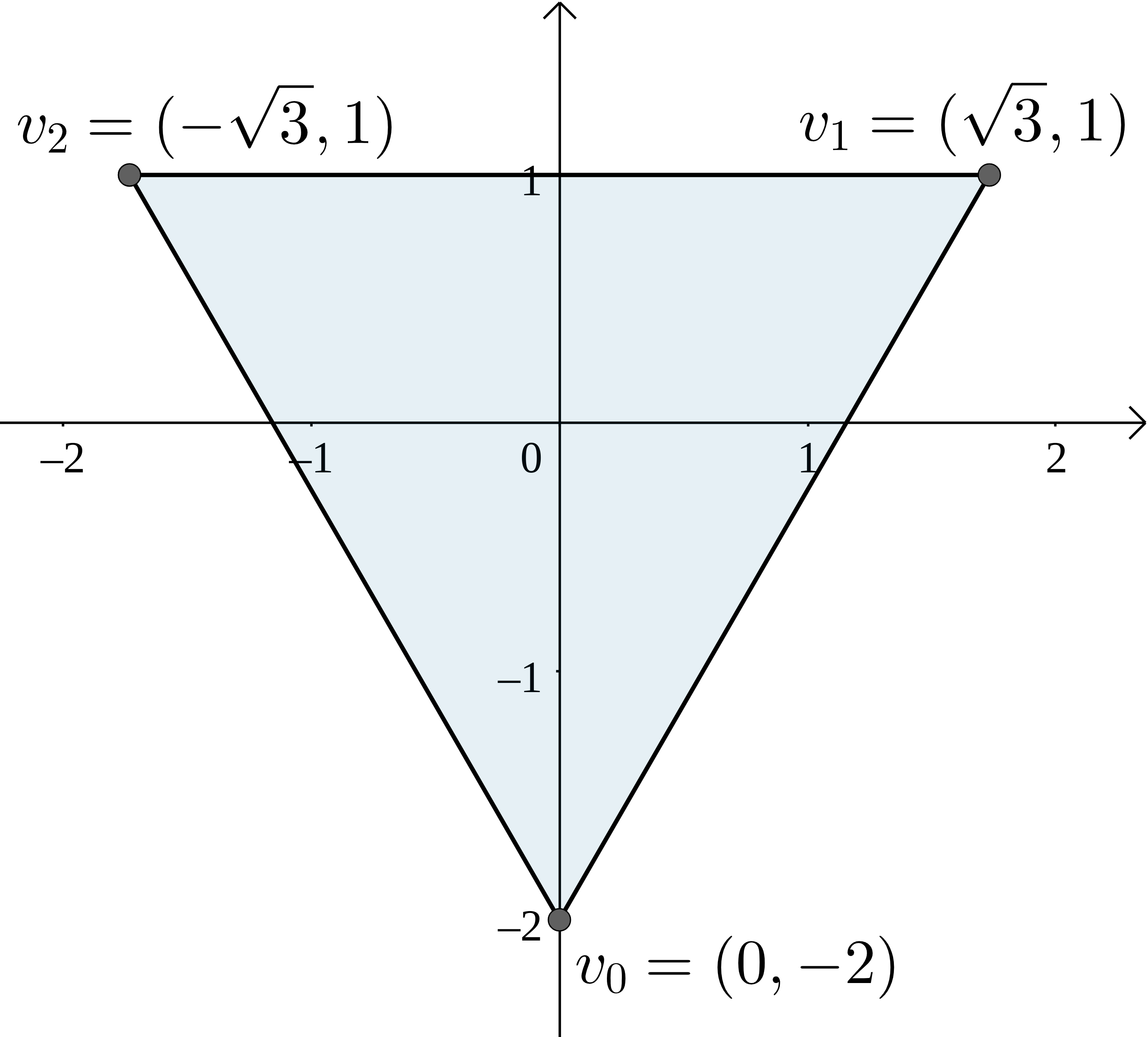}
    \subcaption{$T = \mathrm{conv}\{v_0, v_1, v_2\}.$}
    \label{Figure:T}
    \end{subfigure}\hfill
  \begin{subfigure}{0.33\textwidth}
    \centering
    \includegraphics[width=0.9\linewidth]{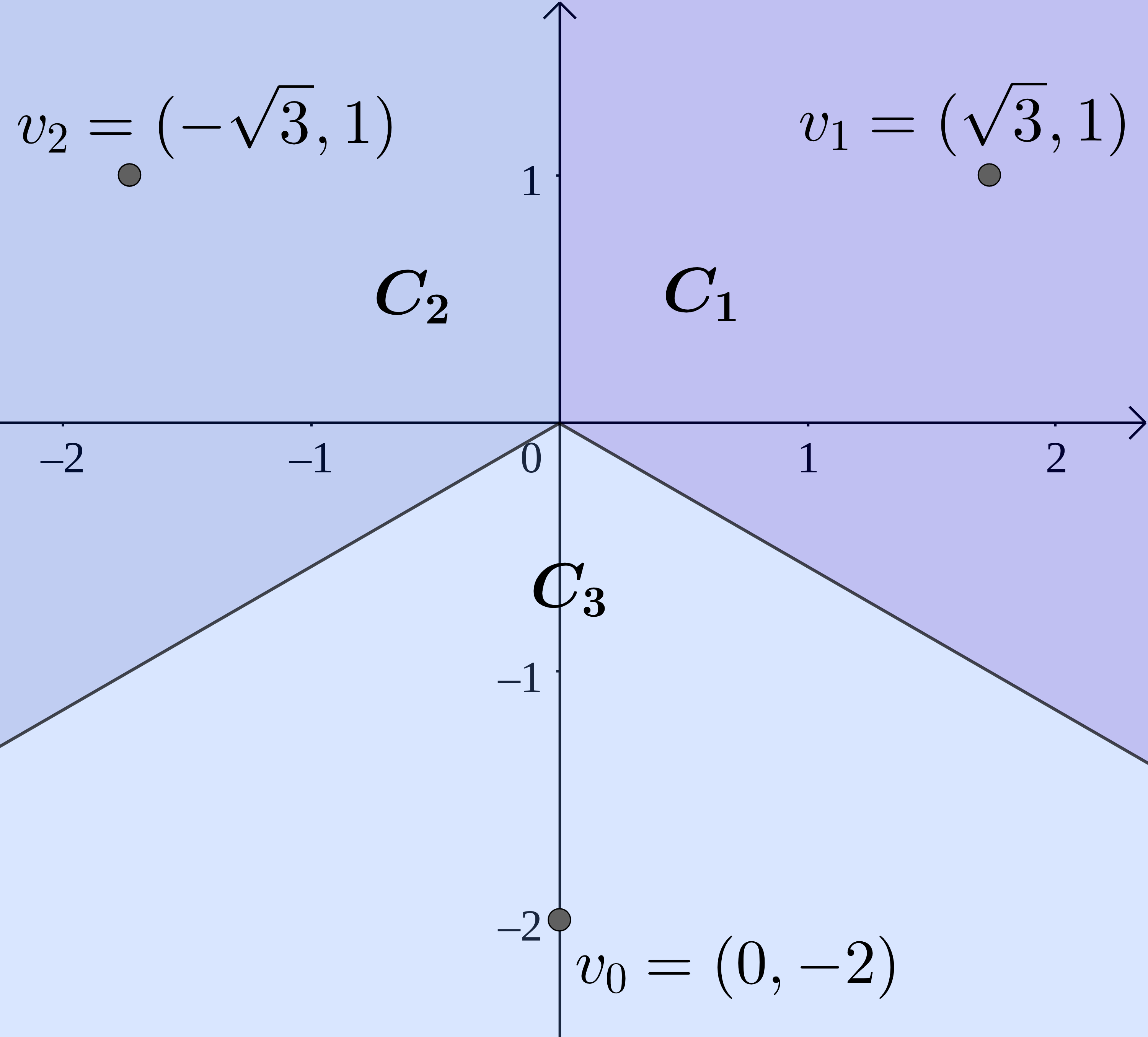}
    \subcaption{$\mathbb{R}^2 = C_1 \cup C_2 \cup C_3.$}
    \label{Figure:C}
  \end{subfigure}
  \begin{subfigure}{0.33\textwidth}
    \centering
    \includegraphics[width=0.9\linewidth]{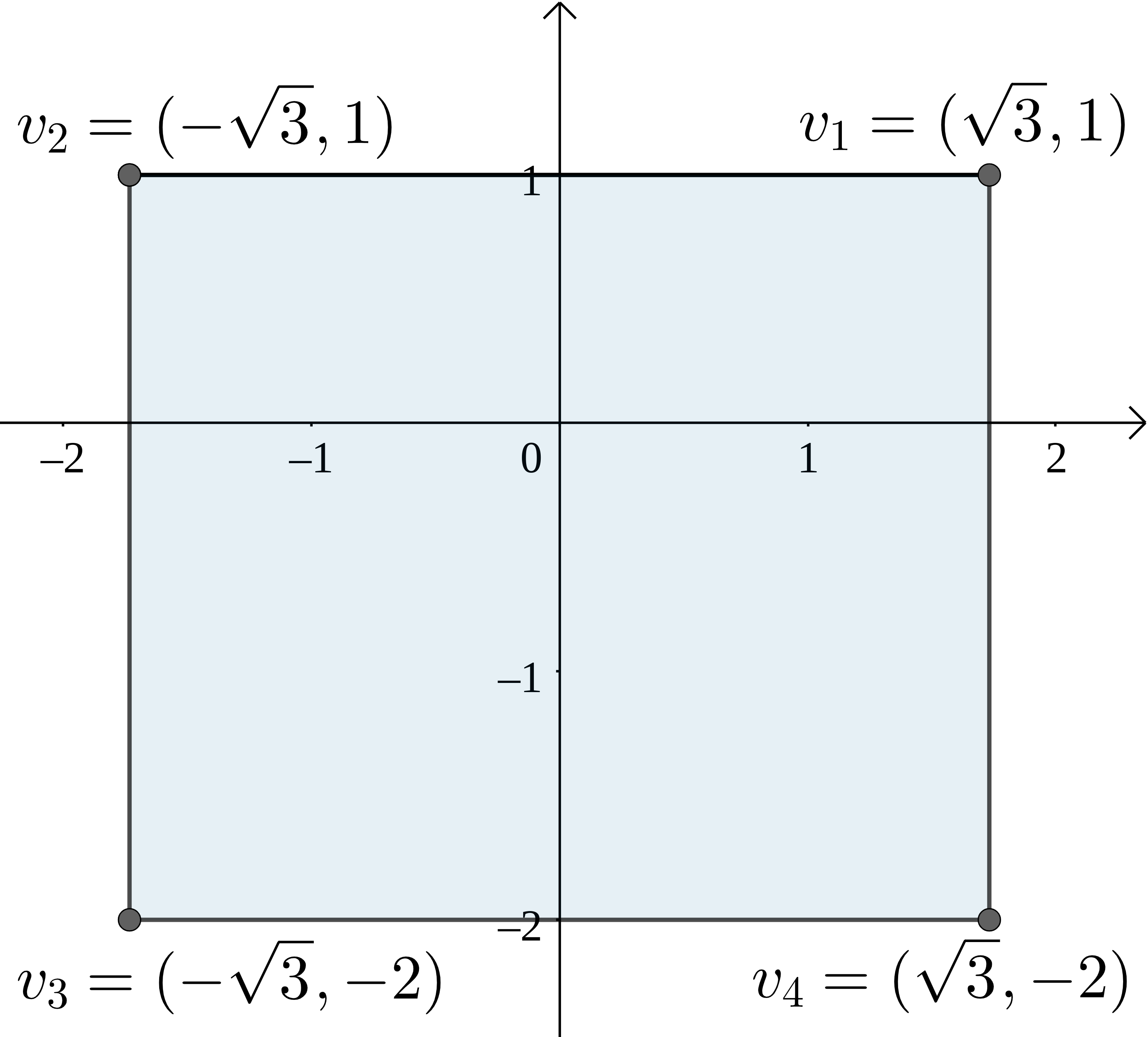}
    \subcaption{$H = \mathrm{conv}\{v_1, v_2, v_3, v_4\}.$}
    \label{Figure:H}
  \end{subfigure}
  \caption{}
\end{figure}

Suppose we know a Lyapunov function $V:\mathbb{R}^{n}\rightarrow \mathbb{R}_{+}$, so that the CLF and SCP properties hold:
\[ \min\limits_{u \in H} \dot{V} = a(x) + b_{1} \omega_{1} + b_{2} \omega_{2} < 0 \mbox{ for all } x \neq 0, \]
where $ a(x) := L_{f}V(x)$ and $b(x) := (b_{1}(x), b_{2}(x))$, with $b_{i}(x) := L_{g_{i}} V(x)$, $i=1,2$. Hence, from \cite[Formula 21]{SOLISDAUN201111042}, we obtain
\begin{equation*}
    \omega(b_1, b_2) = 
    \begin{cases}
    v_{1} = (\sqrt{3}, 1)    & \mbox{ if } (b_1, b_2) \in C_1, \\
    v_{2} = (-\sqrt{3}, 1)   & \mbox{ if } (b_1, b_2) \in C_2, \\
    v_{3} = (0, -2)          & \mbox{ if } (b_1, b_2) \in C_3, \\
    \end{cases}
\end{equation*}
where (see Figure \ref{Figure:C})
\[ C_{1} = \left\{ (b_{1}, b_{2}) \in \mathbb{R}^{2} \right|\left. b_{1} \geq 0, b_{2} \geq -\frac{b_{1}}{\sqrt{3}}\right\},\]
\[ C_{2} = \left\{ (b_{1}, b_{2}) \in \mathbb{R}^{2} \right|\left. b_{1} \leq 0, b_{2} \geq \frac{b_{1}}{\sqrt{3}} \right\},\]
and
\[ C_{3} = \left\{ (b_{1}, b_{2}) \in \mathbb{R}^{2} \right|\left. b_{2} \leq 0, \sqrt{3}b_{2} \leq b_{1} \leq -\sqrt{3} b_{2} \right\},\]
such that $\omega (b)=\left( \omega _{1},\omega _{2}\right) $ is constant on
each open polytopal cone $\mathrm{int}\, C_{i}$, and it is equal to the vertices of the
triangle $T$.

Instead, if the CVS is the hyperbox $H$, defined by
\[ H := \mathrm{conv} \{v_1 = (\sqrt{3}, 1), v_2 = (-\sqrt{3},1), v_{3} = (-\sqrt{3}, -2), v_{4} = (\sqrt{3}, -2)\}, \]
% \begin{equation*}
% \left\{ \bar{v}_{1}=\left( 
% \begin{array}{c}
% \sqrt{3} \\ 
% 1%
% \end{array}%
% \right) ,\ \bar{v}_{2}=\left( 
% \begin{array}{c}
% -\sqrt{3} \\ 
% 1%
% \end{array}%
% \right) ,\ \bar{v}_{3}=\left( 
% \begin{array}{c}
% -\sqrt{3} \\ 
% -2%
% \end{array}%
% \right) ,\ \bar{v}_{4}=\left( 
% \begin{array}{c}
% \sqrt{3} \\ 
% -2%
% \end{array}%
% \right) \right\} ,
% \end{equation*}%
equivalently, defined as $ H: = [-\sqrt{3}, \sqrt{3}] \times [-2, 1] = [-r_{1}^{-}, r_{1}^{+}] \times [-r_{2}^{-}, r_{2}^{+}] \subset \mathbb{R}^{2}$ (see Figure \ref{Figure:H}), then the optimal stabilizer (with the best rate) is $(\overline{\omega}_{1}, \overline{\omega}_{2}) \in H$, defined by the equality
\begin{eqnarray*}
\min\limits_{u \in H} \dot{V} &=& a(x) + b_1 \overline{\omega}_1 + b_2 \overline{\omega}_2 \\
&=& a(x) - (\vert b_1 \vert r_1 + \vert b_2 \vert r_2),
\end{eqnarray*}
consequently (see example in \cite{Leyva2013}),
\[
\overline{\omega}(b_1, b_2) = 
    \begin{cases}
    v_{1} & \mbox{ if } (b_1, b_2) \in \mathrm{cl}(\mathbb{R}^2_{++}), \\
    v_{2} & \mbox{ if } (b_1, b_2) \in \mathrm{cl}(\mathbb{R}^2_{-+}), \\
    v_{3} & \mbox{ if } (b_1, b_2) \in \mathrm{cl}(\mathbb{R}^2_{--}), \\
    v_{4} & \mbox{ if } (b_1, b_2) \in \mathrm{cl}(\mathbb{R}^2_{+-}). \\
    \end{cases}
\]
% \begin{equation*}
% \overline{\omega}\left( b_{1},b_{2}\right) =\left\{ 
% \begin{array}{c}
% \bar{v}_{1},\quad \text{if}\ b\in \mathbb{\bar{R}}_{--}^{2} \\ 
% \bar{v}_{2},\quad \text{if}\ b\in \mathbb{R}_{+-}^{2} \\ 
% \bar{v}_{3},\quad \text{if}\ b\in \mathbb{\bar{R}}_{++}^{2} \\ 
% \bar{v}_{4},\quad \text{if}\ b\in \mathbb{R}_{-+}^{2}%
% \end{array}%
% \right. \text{,}\ 
% \end{equation*}%
The CLF condition for system (\ref{equation:affine_system}) with CVS $H$ implies that $\min\limits_{u \in H} \dot{V} < 0$ for all $x \neq 0$, so that for $a(x) \geq 0$, the inequality
\[ 0 \leq \frac{a(x)}{\beta(x)} < 1, \]
is satisfied, or else
\[ 0 \leq \frac{a(x) + \vert a(x) \vert }{2 \beta(x)} < 1. \]
Hence, we define the following non-negative functions:
\[
\begin{aligned}
    r_i (b_i) &:= 
        \begin{cases}
            r_i^+ & \mbox{ if } b_i \geq 0 \\
            r_i^- & \mbox{ if } b_i \leq 0
        \end{cases}, i = 1, 2, \\
    \beta &:= \vert b_1 \vert r_1 + \vert b_2 \vert r_2 \\
    \vert a \vert + a &:= \vert L_{f} V(x) \vert + L_{f} V(x) \\
    \lambda(x) &:= 1 - \frac{1}{2} (\vert a(x) \vert + a(x)) / \beta(x)
\end{aligned}
\]
and a non-positive function defined as
\[
\tau _i^\varepsilon (x) = 
    \begin{cases}
        m \dfrac{\ln (\lambda(x))}{\lambda(x)} - \varepsilon \vert b_i \vert r_i & \mbox{ if } \beta > 0, \\
        0 & \mbox{ if } \beta = 0,
    \end{cases}
\]
with $\varepsilon >0$ is a tuning parameter. The function $\varrho_i^\varepsilon : \mathbb{R} \times [0, \infty] \rightarrow \mathbb{R}$ is defined by
\[
\varrho_i^\varepsilon (a, \beta) = 
    \begin{cases}
        1 - \left(1 - \dfrac{\vert a \vert + a}{2\beta}\, \dfrac{\vert b_i \vert r_i}{\beta}\right) \exp \left(\tau_i^\varepsilon \dfrac{\vert b_i \vert r_i}{\beta}\right) & \mbox{ if } \vert b_i \vert r_i >0, \\
        0 & \mbox{ if } \vert b_i \vert r_i = 0.
    \end{cases}
\]
This feedback function $u^\varepsilon (x)$ is admissible with the hyperbox $H$, explicitly given and sub-optimal, defined as follows:
\[ u^\varepsilon (x) := (u_1^\varepsilon (x), u_2^\varepsilon (x)) \]
with
\[ u_i^\varepsilon (x) = \varrho_i^\varepsilon (a(x), \beta(x)) \overline{\omega}_i (x),\ i=1,2. \]

In particular, for the affine system
\[
f(x_1, x_2) = 
\begin{pmatrix}
    \sqrt{3} \dfrac{x_2}{1 + x_2^2} \\ 
    \dfrac{x_1}{1 + x_1^2} + \dfrac{x_1^2}{1 + x_2^2}
\end{pmatrix},
g_1 = 
\begin{pmatrix}
    1 \\ 
    0
\end{pmatrix},
g_2 = 
\begin{pmatrix}
    0 \\
    1
\end{pmatrix},
\]
with rectangular CVS $H = [-\sqrt{3}, \sqrt{3}] \times [-2, 1]$, it is possible to generate (see formulas (29)-(31) in \cite{Lin-Sontag:1995}) the continuous control $u^{\varepsilon}(x) := (u_1^\varepsilon (x), u_2^\varepsilon (x))$, given by
\[ u_i^\varepsilon (x) = \varrho_i^\varepsilon (a(x), \beta(x))\, \overline{\omega}_i (x),\ i = 1, 2. \]

Suppose we know a Lyapunov function $V : \mathbb{R}^n \rightarrow \mathbb{R}_{+}$, so that the CLF and SCP properties hold. With
\[ V (x_1, x_2) = \dfrac{1}{2} \left(x_1^2 + x_2^2 \right), \]
then
\[ \dot{V} = x_1 \left(\sqrt{3} \dfrac{x_2}{1 + x_2^2} + u_1\right) + x_2 \left(\dfrac{x_1}{1 + x_1^2} + \dfrac{x_2^2}{1 + x_2^2} + u_2\right) \]
so that
\[ a(x) = x_1 \left(\sqrt{3} \dfrac{x_2}{1 + x_2^2}\right) + x_2 \left( \dfrac{x_1}{1 + x_1^2} + \dfrac{x_2^2}{1 + x_2^2} \right) \]
and $b_1 = x_1$, $b_2 = x_2$, therefore 
\[
\begin{aligned}
    r_1 (x_1) &:= \sqrt{3}, \\
    r_2 (x_2) &:= 
        \begin{cases}
            1 & \mbox{ if } x_2 \geq 0, \\
            2 & \mbox{ if } x_2 \leq 0,
        \end{cases} \\
    \beta &:= \sqrt{3}\vert x_1 \vert + r_2 \vert x_2 \vert.
\end{aligned}
\]
% \begin{eqnarray*}
% i)\qquad \qquad r_{1}\left( x_{1}\right) &:&=\sqrt{3}\quad \&\quad
% r_{2}\left( x_{2}\right) :=\left\{ 
% \begin{array}{c}
% 1,\quad \text{if}\ x_{2}\geq 0 \\ 
% \\ 
% 2,\quad \text{if}\ x_{2}\leq 0%
% \end{array}%
% \right. , \\
% ii)\qquad \qquad \qquad \beta &:&=\sqrt{3}\left\vert x_{1}\right\vert
% +r_{2}\left\vert x_{2}\right\vert .\qquad
% \end{eqnarray*}%
So that the SCP property is satisfied:
\[ \lim\limits_{(x_1, x_2) \rightarrow (0, 0)} \dfrac{a(x_1, x_2)}{\beta(x_1, x_2)} = \lim\limits_{(x_1, x_2) \rightarrow (0, 0)} \frac{x_1 \left(\sqrt{3} \dfrac{x_2}{1 + x_2^2}\right) + x_2 \left(\dfrac{x_1}{1 + x_1^2} + \dfrac{x_2^2}{1 + x_2^2}\right)}{\sqrt{3} \vert x_1 \vert + \vert x_2 \vert r_2} = 0,
\]
also the CLF property is satisfied:
\[ a(x_1, x_2) + \min\limits_{u \in H} \{x_1 u_1 + x_2 u_2 \} <0, \mbox{ for all } (x_1, x_2) \neq (0, 0).\]
The CLF and SCP properties allow the design of the continuous stabilizer $u^\varepsilon (x) := (u_1^\varepsilon(x), u_2^\varepsilon(x)) \in H$.

The optimal stabilizer (with the best rate) is $(\overline{\omega}_1, \overline{\omega}_2) \in H$, defined by the equality
\[
\begin{aligned}
    \min\limits_{u \in H} \dot{V} 
        &= a(x) + b_1 \overline{\omega}_1 + b_2 \overline{\omega}_2 \\
        &= a(x) - (\vert b_1 \vert r_1 + \vert b_2 \vert r_2),
\end{aligned}
\]
consequently (see example in \cite{Leyva2013})
\[
\overline{\omega}(b_1, b_2) = 
    \begin{cases}
    v_{1} & \mbox{ if } (b_1, b_2) \in \mathrm{cl}(\mathbb{R}^2_{++}), \\
    v_{2} & \mbox{ if } (b_1, b_2) \in \mathrm{cl}(\mathbb{R}^2_{-+}), \\
    v_{3} & \mbox{ if } (b_1, b_2) \in \mathrm{cl}(\mathbb{R}^2_{--}), \\
    v_{4} & \mbox{ if } (b_1, b_2) \in \mathrm{cl}(\mathbb{R}^2_{+-}). \\
    \end{cases}
\]
so that, by a straightforward calculation we get that,
\[ \min\limits_{u \in H} \dot{V} = a(x) - (\vert b_1 \vert r_1 + \vert b_2 \vert r_2) < 0, \mbox{ if } x\neq 0. \]

The CLF and SCP properties allow the design of the continuous stabilizer $u^\varepsilon (x) := (u_1^\varepsilon (x), u_2^\varepsilon (x)) \in H$.

\begin{figure}[thb]
\centering
    \includegraphics[width=0.8\textwidth]{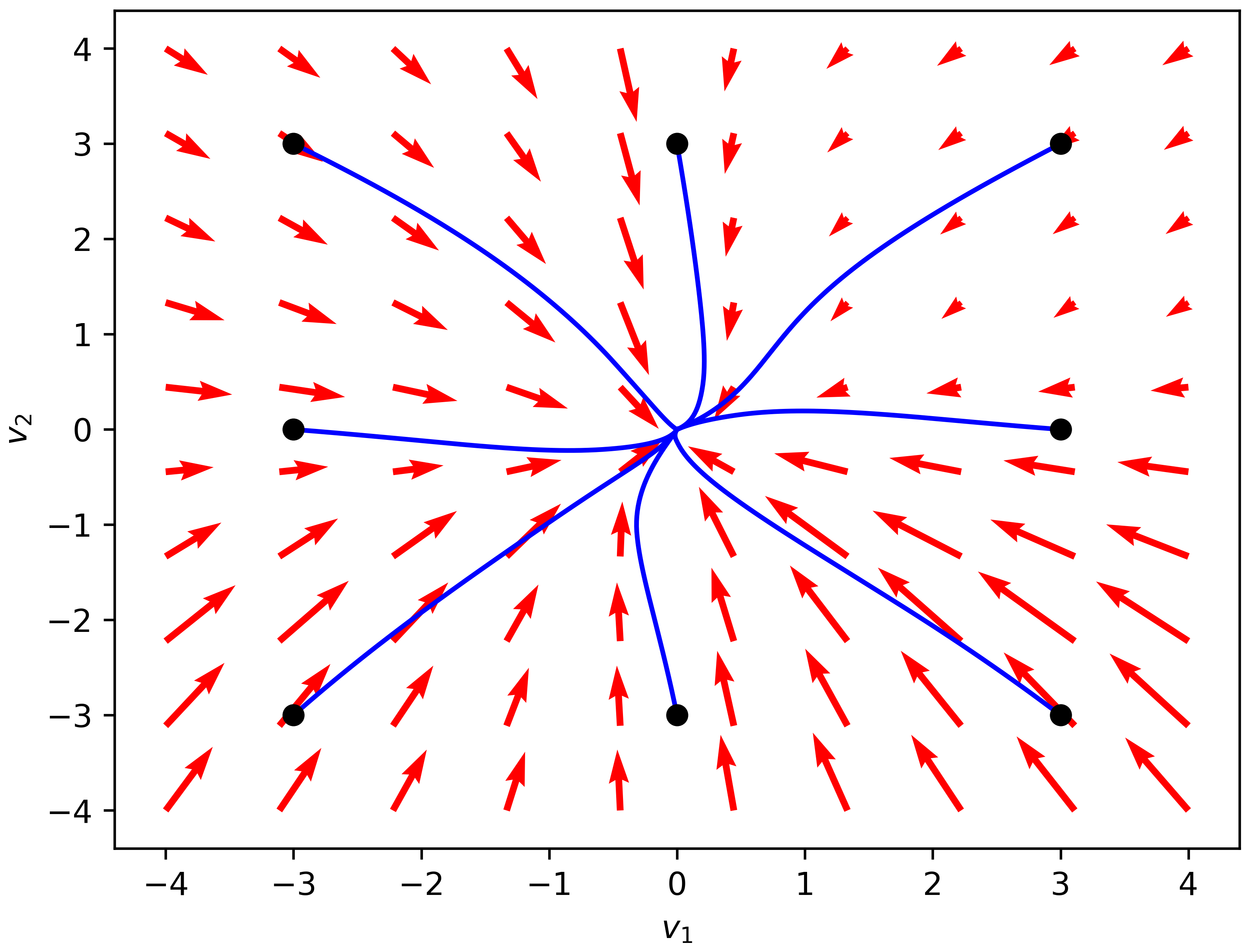}
    \caption{Phase portrait of the system, with feedback controls restricted to the triangle.}
    \label{Figure:phase-portrait}
\end{figure}

The script used to plot Figure \ref{Figure:phase-portrait} is available in \cite{Leyva-Aguirre-Espinoza:2022}.

\section{Results and discussion}

In the present work, we address the problem of the global stabilization of an affine system thought a continuous feedback function restricted to an $m$-dimensional CVS $U_{\phi }$ convex and bounded, represented as a sub-level set
\[ U_\phi := \left\{ u \in \mathbb{R}^{m} \mid \phi(u) \leq 1 \right\}, \]
such that $0 \in \mathrm{int}\, U_{\phi}$, and $\phi : \mathbb{R}^{m} \rightarrow \mathbb{R}$ is a non-negative positively homogeneous function.

We address specially the case of convex polytopes, defined through the inequality $\phi(u) := \max\limits_i \left\{ v_i^T u \right\} \leq 1$, where $\phi (u)$ is a continuous piecewise linear function. For this case in concrete, we show the solution to the CLF-optimization problem (\ref{equation:CLF}), represented by a feedback function $\omega (x)$ taking values at the vertices of the polytope, on such way that is not admissible because it is discontinuous.

In general, for any CVS $U_{\phi}$ we can design an admissible control for the system (\ref{equation:M_affine_system}) using an explicit formula of admissible feedback $u^{\varepsilon} (x)$ with CVS given by an $m$-dimensional asymmetric hyperbox $H$, designed to stabilize globally the affine system (\ref{equation:affine_system}) under the CLF and SCP conditions of the Artstein's theorem, in such way that restricted to $U_{\phi} \subset H$ we get an admissible feedback $w^{\varepsilon}(x)$ that stabilize globally (\ref{equation:M_affine_system}) for some value $M > 1$. Some properties of $u^{\varepsilon}(x) \in H$, such as continuity and the extreme values reaching, are inherited to $w^{\varepsilon}(x)$; in such way that if $u^{\varepsilon} (x) \in \partial H$, then  $w^{\varepsilon} (x) \in \partial U_{\phi}$.

%Given a stabilization problem composed of an affine system (\ref{equation:affine_system}) and any compact CVS $U_{\phi}$, with $0 \in \mathrm{int\,} U_{\phi} \subset \mathbb{R}^{m}$, we describe an approach that allows us to use the design presented in \cite[Theorem 14]{Leyva2013} and summarized as an algorithm in Section \ref{section:example}. In addition to the conditions established in \cite[Theorem 14]{Leyva2013}, we must seek to represent the CVS set as a level set of a non-negative function $\phi(u)$, so that
%\begin{equation*}
%    U_{\phi} = \{u \in \mathbb{R}^{m} \mid \phi(u) \leq 1\},
%\end{equation*}

%Once a hyperbox $H$ has been determined, it is possible to generate the admissible stabilizer $u^{\varepsilon}(x) \in H$ described in formulas (\ref{equation:feedback}) - (\ref{equation:tau}), to obtain by means of (\ref{equation:M_feedback}) the feedback function $w^{\varepsilon}(x)$
%restricted to CVS $U_{\phi} \subset H$.

%The sub-optimality, continuity and structurally decentralized properties of the control $u^{\varepsilon}(x)$ are inherited to the control $w^{\varepsilon}(x)$.

\bibliographystyle{siam}
\bibliography{bibliography}
\end{document}